\theoremstyle{plain}
\newtheorem{thm}{Theorem}[section]
\newtheorem{lem}[thm]{Lemma}
\newtheorem{cor}[thm]{Corollary}
\newtheorem{prop}[thm]{Proposition}
\theoremstyle{definition}
\newtheorem{defn}[thm]{Definition}
\theoremstyle{remark}
\newtheorem{exam}[thm]{Example}
\newtheorem{rem}[thm]{Remark}
\DeclareMathOperator{\Ext}{Ext}
\DeclareMathOperator{\Supp}{Supp}
\DeclareMathOperator{\Hom}{Hom}
\DeclareMathOperator{\Ker}{Ker}
\DeclareMathOperator{\Coker}{Coker}
\DeclareMathOperator{\depth}{depth}
\DeclareMathOperator{\Ass}{Ass}
\DeclareMathOperator{\lc}{H}
\newcommand{\lo}{\longrightarrow}
\newcommand{\fa}{\mathfrak{a}}
\newcommand{\fp}{\mathfrak{p}}
\newcommand{\Z}{\mathbb{Z}}
\begin{document}

\bibliographystyle{amsplain}

\author{Moharram Aghapournahr}
\address{ Moharram Aghapournahr\\Faculty of Mathematical Sciences, Teacher Training
University, 599 Taleghani Avenue, Tehran 15614, Iran.}

\email{m.aghapour@gmail.com}

\author{Leif Melkersson}
\address{Leif Melkersson\\Department of Mathematics, Link\"{o}ping University, S-581 83 Link\"{o}ping, Sweden.}

\email{lemel@mai.liu.se}

\keywords{Local cohomology, associated prime, asymptotic behaviors, tameness.\\}

\subjclass[2000]{13D45, 13D07}

\title[Local cohomology]
{Local cohomology: Associated primes,  \\ artinianness and asymptotic behaviour}


\begin{abstract}
 Let $R$ be a noetherian ring, $\fa$ an ideal of $R$, $M$ an $R$--module and $n$ a non-negative integer.
 In this paper we first will study the finiteness properties of the kernel and the cokernel of the natural
 map $f:\Ext^n_{R}(R/\fa,M)\lo \Hom_{R}(R/\fa,\lc^{n}_{\fa}(M))$. Then we will get some corollaries about the
 associated primes and artinianness of local cohomology modules. Finally we will study the asymptotic behaviour
 of the kernel and the cokernel of this natural map in the graded case.

\end{abstract}

\maketitle

\section{Introduction}

Throughout $R$ is a commutative noetherian ring. Our terminology follows the book \cite{BSh} on local cohomology. Huneke
formulated and discussed several problems in \cite{Hu} about local cohomology modules $\lc^{n}_{\fa}(M)$ of $M$. One of them
is when do they have just finitely many associated prime ideals. Another one is about when such modules are artinian. Furthermore
the following conjecture was made by Grothendieck in \cite{SGA2}:

\noindent {\bf Conjecture:} For any ideal $\fa$ and finite $R$--module $M$, the module $\Hom_{R}(R/\fa,\lc^{n}_{\fa}(M))$ is finite
for all $n\geq 0$.

This was thought of  as a substitute for the artinianness of $\lc^{n}_{\fa}(M)$, in the case when $\fa$ is the  maximal ideal of a local
 ring. Morover the finiteness of $\Hom_{R}(R/\fa,\lc^{n}_{\fa}(M))$ implies the finiteness of the set of associated prime ideals of $\lc^{n}_{\fa}(M)$.
  Although this conjecture is not true in general as shown by Hartshorne in \cite{Ha}, there are some attempts to show that under some conditions, for
  some number $n$, the module $\Hom_{R}(R/\fa,\lc^{n}_{\fa}(M))$ is finite, see \cite[Theorem 3.3]{AKS}, \cite[Theorem 6.3.9]{DY1} and \cite[Theorem 2.1]
  {DY2}.

In section 2, in view of Grothendieck's conjecture and \cite[Theorem 6.3.9]{DY1}, we first study finiteness properties of the kernel and the cokernel of
 the natural homomorphism
\begin{center}
$f:\Ext^n_{R}(R/\fa,M)\lo \Hom_{R}(R/\fa,\lc^{n}_{\fa}(M))$
\end{center}
under certain conditions, using the technique introduced by the second author in \cite{Mel}. We will find some relations  between finiteness of the
associated primes of $\Ext^n_{R}(R/\fa,M)$ and $\lc^{n}_{\fa}(M)$ that will give us a generalization of \cite[Proposition 1.1 part (b)]{Mar},
 \cite[Lemma 2.5 part (c)]{ASc} and \cite[Proposition 2.2]{BRS}. Also we show that under certain conditions $\Ext^n_{R}(R/\fa,M)$ is artinian
 if and only if $\lc^{n}_{\fa}(M)$ is. One of our main results of this paper is

\noindent{\bf Corollary 2.2.}
\begin{flushleft}
(a){\it If $\Ext^{n-j}_{R}(R/\fa, \lc^{j}_{\fa}(M))= 0$ for all $j<n$, then $f$ is injective.}
\end{flushleft}
(b){\it If $\Ext^{n+1-j}_{R}(R/\fa, \lc^{j}_{\fa}(M))= 0$ for all $j<n$, then $f$ is surjective.}
\begin{flushleft}
(c){\it If $\Ext^{t-j}_{R}(R/\fa, \lc^{j}_{\fa}(M))= 0$ for $t=n,n+1$ and for all $j<n$, then}
\end{flushleft}

{\it $f$ is an isomorphism.}

In section 3 we will study the asymptotic behaviour of the kernel and the cokernel of the above mentioned natural map in the graded case.


\section{Associated primes and artinianness}

\begin{prop}\label{P:natmap}
Let $R$ be a noetherian ring, $\fa$ an ideal of $R$ and $M$ an
$R$--module. Let $n$ be a non-negative integer and $\mathcal S$ be a
Serre subcategory of the category of $R$--modules i.e. it is closed
under taking submodules, quotients and extensions. Consider the
natural homomorphism
\begin{center}
$f:\Ext^n_{R}(R/\fa,M)\lo \Hom_{R}(R/\fa,\lc^{n}_{\fa}(M))$

\end{center}

  \item[(a)]If $\Ext^{n-j}_{R}(R/\fa, \lc^{j}_{\fa}(M))$ belongs to $\mathcal S$ for all $j<n$, then $\Ker{f}$ belongs to $\mathcal S$. {\rm{(}}In
  particular if $\Ext^{n-j}_{R}(R/\fa, \lc^{j}_{\fa}(M))$ is finite for all $j<n$, then $\Ker{f}$ is finite.{\rm{)}}
  \item[(b)]If $\Ext^{n+1-j}_{R}(R/\fa, \lc^{j}_{\fa}(M))$ belongs to $\mathcal S$ for all $j<n$, then $\Coker{f}$ belongs to $\mathcal S$. {\rm{(}}
  In particular if $\Ext^{n+1-j}_{R}(R/\fa, \lc^{j}_{\fa}(M))$ is finite for all $j<n$, then $\Coker{f}$ is finite.{\rm{)}}
  \item[(c)]If $\Ext^{t-j}_{R}(R/\fa, \lc^{j}_{\fa}(M))$ belongs to $\mathcal S$ for $t=n,n+1$ and for all $j<n$, then $\Ker{f}$ and $\Coker{f}$ both
   belong to $\mathcal S$. Thus $\Ext^n_{R}(R/\fa,M)$ belongs to $\mathcal S$ if and only if $\Hom_{R}(R/\fa,\lc^{n}_{\fa}(M))$ belongs to $\mathcal S$.
    {\rm{(}}In particular if $\Ext^{t-j}_{R}(R/\fa, \lc^{j}_{\fa}(M))$ is finite for $t=n,n+1$ and for all $j<n$ then $\Ker{f}$ and $\Coker{f}$
     both are finite, thus $\Ext^n_{R}(R/\fa,M)$ is finite if and only if $\Hom_{R}(R/\fa,\lc^{n}_{\fa}(M))$ is finite. {\rm{(}}See also
      \cite[Theorem 6.3.9]{DY1}{\rm{)}}{\rm{)}}.

\end{prop}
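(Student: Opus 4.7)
I would deduce the proposition from the Grothendieck spectral sequence for the composition $\Hom_R(R/\fa,-)\circ\G_\fa$. Since $\G_\fa$ preserves injectivity of $R$-modules and $\Hom_R(R/\fa,N)=\Hom_R(R/\fa,\G_\fa(N))$ functorially in $N$, one obtains a convergent first-quadrant spectral sequence
\[ E_2^{p,q}=\Ext^p_R(R/\fa,\lc^q_\fa(M))\Longrightarrow \Ext^{p+q}_R(R/\fa,M), \]
and $f$ is the edge map at $(0,n)$. Writing $0=F^{n+1}\subseteq F^n\subseteq\dots\subseteq F^0=\Ext^n_R(R/\fa,M)$ for the induced filtration, one has $F^p/F^{p+1}\cong E_\infty^{p,n-p}$, so that $\Ker f=F^1$ and $\Image f=E_\infty^{0,n}$.

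For (a), the filtration $F^1\supseteq F^2\supseteq\dots\supseteq F^n\supseteq 0$ presents $\Ker f$ as an iterated extension of the modules $E_\infty^{p,n-p}$ for $1\le p\le n$. Each of these is a subquotient of $E_2^{p,n-p}=\Ext^{n-j}_R(R/\fa,\lc^j_\fa(M))$ with $j=n-p<n$, and the hypothesis of (a) together with closure of $\mathcal S$ under subquotients and extensions yields $\Ker f\in\mathcal S$. For (b), $\Coker f=E_2^{0,n}/E_\infty^{0,n}$ is an iterated extension of the images of the differentials $d_r^{0,n}\colon E_r^{0,n}\to E_r^{r,n-r+1}$ for $r=2,\dots,n+1$; each such image is a subquotient of $E_2^{r,n-r+1}=\Ext^{n+1-j}_R(R/\fa,\lc^j_\fa(M))$ with $j=n-r+1<n$, which places $\Coker f$ in $\mathcal S$ under the hypothesis of (b). Part (c) then follows at once from (a) and (b) via the four-term exact sequence
\[ 0\to\Ker f\to\Ext^n_R(R/\fa,M)\to\Hom_R(R/\fa,\lc^n_\fa(M))\to\Coker f\to 0 \]
and Serre closure.

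The main technical point is the clean setup of the spectral sequence in the required generality. An equivalent route, closer to Melkersson's technique in \cite{Mel}, proceeds by induction on $n$: first use $0\to\G_\fa(M)\to M\to M/\G_\fa(M)\to 0$ to reduce to the case $\G_\fa(M)=0$ --- the hypothesized modules already include $\Ext^n_R(R/\fa,\lc^0_\fa(M))$ and $\Ext^{n+1}_R(R/\fa,\lc^0_\fa(M))$, so the contribution of $\G_\fa(M)$ lies in $\mathcal S$ --- then choose an $M$-regular element $x\in\fa$ (which exists once $\G_\fa(M)=0$) and exploit that $x$ acts as zero on $\Ext^i_R(R/\fa,-)$ via $0\to M\xrightarrow{x}M\to M/xM\to 0$, combined with the local-cohomology long exact sequence. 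The bookkeeping reproduces exactly the indices appearing above.
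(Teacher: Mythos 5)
Your primary argument via the Grothendieck spectral sequence
\[ E_2^{p,q}=\Ext^p_R(R/\fa,\lc^q_\fa(M))\Longrightarrow \Ext^{p+q}_R(R/\fa,M) \]
is correct, and it is genuinely different from the paper's proof. The bookkeeping with the edge map is right: $\Ker f=F^1$ is an iterated extension of the subquotients $E_\infty^{p,n-p}$ of $E_2^{p,n-p}=\Ext^{n-j}_R(R/\fa,\lc^j_\fa(M))$, $j=n-p$, $1\le p\le n$, which is exactly the set of modules hypothesized in (a); and $\Coker f=E_2^{0,n}/E_\infty^{0,n}$ is an iterated extension of the images of $d_r^{0,n}\subseteq E_r^{r,n-r+1}$, which are subquotients of $\Ext^{n+1-j}_R(R/\fa,\lc^j_\fa(M))$, $j=n-r+1$, $2\le r\le n+1$, matching (b). Closure of $\mathcal S$ under subquotients and extensions then finishes the argument. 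The paper instead avoids spectral sequences entirely: it factors $f=g\circ h$ through the sequence $0\to\G_\fa(M)\to M\to M/\G_\fa(M)\to 0$, and then performs a dimension shift by taking the injective envelope $E$ of $M/\G_\fa(M)$, setting $N=E/(M/\G_\fa(M))$, and noting that $\G_\fa$ kills $E$; an induction on $n$ closes the loop, with Melkersson's Lemma 7.9 handling $n=1$. Your route is shorter once the spectral sequence machinery is granted; the paper's route is more elementary and in the spirit of the technique it cites.

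One caveat on your sketched alternative at the end: after reducing to $\G_\fa(M)=0$ you propose to pick an $M$-regular element $x\in\fa$, claiming this exists once $\G_\fa(M)=0$. That is false for arbitrary $M$ (as allowed here). With $\Ass(M)$ infinite, prime avoidance is unavailable, and $\fa$ may lie in $\bigcup_{\fp\in\Ass(M)}\fp$ even though no single $\fp\in\Ass(M)$ contains $\fa$; for instance $M=\bigoplus_\fp R/\fp$ over the height-one primes of $R=k[[x,y]]$ and $\fa=(x,y)$. The paper's dimension shift via the injective envelope sidesteps this entirely and is the correct replacement for the regular-element step when $M$ is not finite. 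Since your spectral sequence proof does not rely on this, the proposal as a whole stands.
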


\begin{proof}
By the exact sequence  $0\rightarrow \Gamma_{\fa}(M)\rightarrow M\rightarrow M/\Gamma_{\fa}(M)\rightarrow 0$  we obtain the exact sequence
(\textasteriskcentered) below:

\begin{center}
$\Ext^n_{R}(R/\fa,\Gamma_{\fa}(M))\rightarrow \Ext^n_{R}(R/\fa,M)\overset{h}\rightarrow \Ext^n_{R}(R/\fa,M/\Gamma_{\fa}(M))\rightarrow
 \Ext^{n+1}_{R}(R/\fa,\Gamma_{\fa}(M)).$
\end{center}

 We have the commutative diagram
$$\begin{matrix}\\
&&\Ext^n_{R}(R/\fa,M) &\overset{h}\lo &\Ext^n_{R}(R/\fa,M/\Gamma_{\fa}(M))\\
&&f\big\downarrow & &\big\downarrow{\bar f}\\
&&\Hom_{R}(R/\fa,\lc^{n}_{\fa}(M)) &\overset{\bar{h}}\lo &\Hom_{R}(R/\fa,\lc^{n}_{\fa}(M/\Gamma_{\fa}(M)))\\
\end{matrix}.$$
Note that $\bar{h}$ is an isomorphism.
Let $g=\bar{h}^{-1}\circ{\bar{f}}$. Thus $f=g\circ{h}$ and we obtain the exact sequence
\begin{flushleft}
$0\rightarrow \Ker{h}\rightarrow \Ker{f}\rightarrow
\Ker{g}\rightarrow \Coker{h}\rightarrow \Coker{f}\rightarrow
\Coker{g}\rightarrow{0}$
\end{flushleft}

{\it Proof of} (a):

For $n=0$, since $\Ext^0_{R}(R/\fa,M) \cong
\Hom_{R}(R/\fa,\Gamma_{\fa}(M))$ thus $\Ker{f}=0$ and the result is
clear. For $n\geq 1$ by hypothesis and the exact sequence
(\textasteriskcentered), $\Ker{h}$ belongs to $\mathcal S$.
Therefore it is enough to show that $\Ker{g}$ belongs to $\mathcal
S$. We prove this by induction on $n$. For $n=1$ since $\bar{h}$ is
an isomorphism, hence $\Ker{g}=\Ker{\bar{f}}$ but $\bar{f}$ is an
isomorphism by \cite[Lemma 7.9]{Mel}. Assume $n>1$ and the case
$n-1$ is settled. Let $E$ be
 the injective hull of $M/\Gamma_{\fa}(M)$ and set $N=E/(M/\Gamma_{\fa}(M))   $. Since $\Gamma_{\fa}(M/\Gamma_{\fa}(M))=0$, we have $\Hom_{R} (R/\fa,E)=0$
 and $\Gamma_{\fa}(E)=0$. Thus we get the isomorphisms
\begin{center}
$\Ext^i_{R}(R/\fa,N)\cong \Ext^{i+1}_{R}(R/\fa,M/\Gamma_{\fa}(M))$
\end{center}
and
\begin{center}
$\lc^{i}_{\fa}(N)\cong \lc^{i+1}_{\fa}(M/\Gamma_{\fa}(M))\cong \lc^{i+1}_{\fa}(M)$
\end{center}
for all $i\geq 0$. Therefore
\begin{center}
$\Hom_{R}(R/\fa,\lc^{n-1}_{\fa}(N))\cong \Hom_{R}(R/\fa,\lc^{n}_{\fa}(M))$.
\end{center}
In addition, for all $j<n-1$ the modules
\begin{center}
$\Ext^{n-1-j}_{R}(R/\fa, \lc^{j}_{\fa}(N))\cong \Ext^{n-1-j}_{R}(R/\fa, \lc^{j+1}_{\fa}(M))$
\end{center}
belong to $\mathcal S$. Now, by the induction hypothesis, the kernel of the natural homomorphism
\begin{center}
$f^\prime: \Ext^{n-1}_{R}(R/\fa,N)\lo \Hom_{R}(R/\fa,\lc^{n-1}_{\fa}(N))$

\end{center}
belongs to $\mathcal S$. But $\Ker{f^\prime}=\Ker{g}$, so $\Ker{g}$ belongs to $\mathcal S$.
\begin{flushleft}
{\it Proof of} (b):
\end{flushleft}
 The proof is similar to (a).
\begin{flushleft}
{\it Proof of} (c):
\end{flushleft}
It is clear by (a) and (b).
\end{proof}

The following corollary is one of our main results of this paper. It is well-known by using a spectral sequence argument as
in \cite[Proposition 1.1 (b)]{Mar} or another method as in \cite[Lemma 2.5 (b)]{ASc} to prove that, (when
$M$ is a finite module and $n=\depth_\fa M$) or more generally when the local cohomology vanishes below $n$, for example
 use \cite[Theorem 11.3]{Rot}, then the natural map
\begin{center}
$f:\Ext^n_{R}(R/\fa,M)\lo \Hom_{R}(R/\fa,\lc^{n}_{\fa}(M))$
\end{center}
is an isomorphism.
 In the following corollary we show that without any condition on $M$ it is not necessary that these local cohomology modules
 are zero in order that the natural map is an isomorphism.

\begin{cor}\label{C:isiso}
  \item[(a)]If $\Ext^{n-j}_{R}(R/\fa, \lc^{j}_{\fa}(M))= 0$ for all $j<n$, then $f$ is injective.
  \item[(b)]If $\Ext^{n+1-j}_{R}(R/\fa, \lc^{j}_{\fa}(M))= 0$ for all $j<n$, then $f$ is surjective.
  \item[(c)]If $\Ext^{t-j}_{R}(R/\fa, \lc^{j}_{\fa}(M))= 0$ for $t=n,n+1$ and for all $j<n$, then

$f$ is an isomorphism.

\end{cor}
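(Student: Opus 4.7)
The plan is to deduce this corollary as an immediate specialization of Proposition \ref{P:natmap}. The key observation is that the class $\mathcal{S} = \{0\}$ consisting only of the zero module is a Serre subcategory: it is trivially closed under submodules, quotients, and extensions. Saying that a module ``belongs to $\mathcal{S}$'' is then the same as saying that it is zero, and in particular $\Ker f \in \mathcal{S}$ means $f$ is injective while $\Coker f \in \mathcal{S}$ means $f$ is surjective.

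For part (a), I apply Proposition \ref{P:natmap}(a) with this choice of $\mathcal{S}$: the hypothesis $\Ext^{n-j}_{R}(R/\fa,\lc^{j}_{\fa}(M))=0$ for all $j<n$ is exactly the statement that each such module belongs to $\mathcal{S}$, so the proposition yields $\Ker f \in \mathcal{S}$, i.e.\ $\Ker f = 0$. For part (b), I apply Proposition \ref{P:natmap}(b) with the same $\mathcal{S}$ to conclude $\Coker f = 0$, so $f$ is surjective. Part (c) then follows by combining (a) and (b), exactly as Proposition \ref{P:natmap}(c) combines its first two parts.

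There is really no obstacle here beyond recognizing that $\{0\}$ qualifies as a Serre subcategory; the whole proof is one sentence citing the preceding proposition. The substantive work, namely the diagram chase with the exact sequence arising from $0\to\Gamma_\fa(M)\to M\to M/\Gamma_\fa(M)\to 0$, the commutative square relating $f$ to $\bar f$, and the inductive reduction using the injective hull of $M/\Gamma_\fa(M)$, has already been carried out inside the proof of Proposition \ref{P:natmap}.
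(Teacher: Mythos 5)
Your proposal is exactly the paper's proof: the authors simply say ``In Proposition \ref{P:natmap} set $\mathcal S=\{0\}$,'' which is precisely the observation you make, including the check that $\{0\}$ is a Serre subcategory. Correct and identical in approach.
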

\begin{proof}

 In proposition \ref{P:natmap} set $\mathcal S=\{0\}$.
\end{proof}
\begin{cor}\label{C:iso}
If $\lc^{i}_{\fa}(M)=0$ for all $i<n$ {\rm{(}}in particular, when $M$ is finite and $n=\depth_\fa M${\rm{)}} then
\begin{center}
$\Ext^n_{R}(R/\fa,M) \cong \Hom_{R}(R/\fa,\lc^{n}_{\fa}(M))$
\end{center}
\end{cor}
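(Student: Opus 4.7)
The plan is to deduce this corollary directly from part (c) of Corollary \ref{C:isiso} (the previous corollary), which already gives an isomorphism under the hypothesis that certain Ext modules vanish. So the main task is simply to verify that the vanishing hypothesis on local cohomology forces the vanishing hypothesis on Ext that Corollary \ref{C:isiso}(c) requires.

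Concretely, suppose $\lc^{i}_{\fa}(M)=0$ for all $i<n$. Then for every $j<n$ and every integer $t$, the module $\Ext^{t-j}_{R}(R/\fa,\lc^{j}_{\fa}(M))$ is $\Ext^{t-j}_{R}(R/\fa,0)=0$; in particular this holds for $t=n$ and $t=n+1$. So Corollary \ref{C:isiso}(c) applies and yields that $f:\Ext^n_{R}(R/\fa,M)\lo \Hom_{R}(R/\fa,\lc^{n}_{\fa}(M))$ is an isomorphism, which is the stated conclusion.

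For the parenthetical remark, when $M$ is finite and $n=\depth_\fa M$, I would invoke the standard characterization of depth via local cohomology (see, e.g., \cite[Theorem 6.2.7]{BSh}), according to which $\depth_\fa M$ equals the least integer $i$ with $\lc^{i}_{\fa}(M)\neq 0$. Thus $\lc^{i}_{\fa}(M)=0$ for all $i<n$ in this situation, so the first part applies.

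There is no real obstacle: once Corollary \ref{C:isiso} is in hand the argument is immediate, and the only non-formal input is the depth–local-cohomology characterization that justifies the ``in particular'' clause.
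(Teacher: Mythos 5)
Your proof is correct and matches the paper's intent: the paper states this corollary without proof, evidently as an immediate consequence of Corollary~\ref{C:isiso}(c), which is exactly the route you take, and your appeal to the depth--local-cohomology characterization for the parenthetical is the standard justification.
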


\begin{cor}\label{C:art}
If $\Ext^{t-j}_{R}(R/\fa, \lc^{j}_{\fa}(M))$ is artinian for $t=n,n+1$ and for all $j<n$, then $\Ext^n_{R}(R/\fa,M)$ is artinian
if and only if $\lc^{n}_{\fa}(M)$ is artinian.
\end{cor}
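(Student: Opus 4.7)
The plan is to deduce Corollary~\ref{C:art} by combining Proposition~\ref{P:natmap}(c) with Melkersson's well-known criterion for artinianness of torsion modules. First, I would verify that the class of artinian $R$-modules forms a Serre subcategory $\mathcal{S}$ of the category of $R$-modules: it is closed under submodules, quotients, and extensions (the extension closure follows from the standard short-five-lemma style argument). This allows us to invoke part (c) of the proposition with this choice of $\mathcal{S}$.

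Next, I would apply Proposition~\ref{P:natmap}(c) directly: the hypothesis of Corollary~\ref{C:art} is precisely that $\Ext^{t-j}_{R}(R/\fa, \lc^{j}_{\fa}(M))\in \mathcal S$ for $t=n,n+1$ and all $j<n$. The conclusion we extract is that $\Ker f$ and $\Coker f$ are both artinian, and hence $\Ext^{n}_{R}(R/\fa,M)$ is artinian if and only if $\Hom_{R}(R/\fa,\lc^{n}_{\fa}(M))$ is artinian.

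The remaining step is to translate artinianness of $\Hom_{R}(R/\fa,\lc^{n}_{\fa}(M))$ into artinianness of $\lc^{n}_{\fa}(M)$ itself. The key observation is that $\lc^{n}_{\fa}(M)$ is an $\fa$-torsion module, and
\[
\Hom_{R}(R/\fa,\lc^{n}_{\fa}(M))\;=\;(0:_{\lc^{n}_{\fa}(M)}\fa).
\]
Melkersson's criterion (see \cite{Mel}) asserts that an $\fa$-torsion module $N$ is artinian if and only if $(0:_N\fa)$ is artinian. Applying this to $N=\lc^{n}_{\fa}(M)$ converts the condition on $\Hom_{R}(R/\fa,\lc^{n}_{\fa}(M))$ into the desired condition on $\lc^{n}_{\fa}(M)$, completing the equivalence.

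I do not anticipate any serious obstacle: once Proposition~\ref{P:natmap} is in place, the only nontrivial ingredient is Melkersson's criterion, which is available in the reference already cited in the proof of Proposition~\ref{P:natmap}(a). The main thing to be careful about is checking that artinian modules indeed form a Serre subcategory (which is standard) and that Melkersson's criterion is applied to a genuinely $\fa$-torsion module, which $\lc^{n}_{\fa}(M)$ always is.
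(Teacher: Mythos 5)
Your proof is correct and follows essentially the same route as the paper's: apply Proposition~\ref{P:natmap}(c) with $\mathcal S$ the Serre subcategory of artinian modules, then convert between $\Hom_R(R/\fa,\lc^n_\fa(M))$ and $\lc^n_\fa(M)$ via the criterion that an $\fa$-torsion module is artinian iff its $\fa$-socle is. The only cosmetic difference is that the paper cites this criterion as \cite[Theorem 7.1.2]{BSh} rather than via \cite{Mel}.
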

\begin{proof}
It is immediate by using \ref{P:natmap} (c) and \cite[Theorem 7.1.2]{BSh}.
\end{proof}
\begin{cor}\label{C:ass}
  \item[(a)]If $\Ext^{n-j}_{R}(R/\fa, \lc^{j}_{\fa}(M))= 0$ for all $j<n$, then
\begin{center}
   $\Ass_R(\lc^{n}_{\fa}(M))\subset \Ass_R(\Ext^n_{R}(R/\fa,M))\cup \Ass_R(\Coker{f})$.
\end{center}
  \item[(b)]If $\Ext^{n+1-j}_{R}(R/\fa, \lc^{j}_{\fa}(M))= 0$ for all $j<n$, then
\begin{center}
  $\Ass_R(\lc^{n}_{\fa}(M))\subset \Ass_R(\Ext^n_{R}(R/\fa,M))\cup \Supp_R(\Ker{f})$.
\end{center}
  \item[(c)]If $\Ext^{t-j}_{R}(R/\fa, \lc^{j}_{\fa}(M))= 0$ for $t=n,n+1$ and for all $j<n$, then
\begin{center}
$\Ass_R(\Ext^n_{R}(R/\fa,M))= \Ass_R(\lc^{n}_{\fa}(M))$.
\end{center}
\end{cor}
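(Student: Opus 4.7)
The plan is to reduce all three parts to Corollary \ref{C:isiso} combined with one basic observation about $\fa$-torsion modules. My first step is to record that, since $\lc^{n}_{\fa}(M)$ is $\fa$-torsion, every $\fp\in\Ass_R(\lc^{n}_{\fa}(M))$ automatically contains $\fa$, so any $x\in\lc^{n}_{\fa}(M)$ with $\Ann_R(x)=\fp$ is annihilated by $\fa$ and therefore lies in the submodule $\Hom_R(R/\fa,\lc^{n}_{\fa}(M))=(0:_{\lc^{n}_{\fa}(M)}\fa)$. This yields the identification
\[
\Ass_R(\lc^{n}_{\fa}(M))=\Ass_R(\Hom_R(R/\fa,\lc^{n}_{\fa}(M))),
\]
which lets me translate information about the target of $f$ into information about $\lc^{n}_{\fa}(M)$ itself.

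For part (a), Corollary \ref{C:isiso}(a) tells me that $f$ is injective, so I have the short exact sequence
\[
0\lo\Ext^n_R(R/\fa,M)\lo\Hom_R(R/\fa,\lc^{n}_{\fa}(M))\lo\Coker f\lo 0.
\]
The standard inclusion $\Ass_R(B)\subset\Ass_R(A)\cup\Ass_R(B/A)$ valid for any short exact sequence $0\to A\to B\to B/A\to 0$, combined with the preliminary reduction above, delivers the claim at once.

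For part (b), Corollary \ref{C:isiso}(b) gives surjectivity of $f$. I would pick $\fp\in\Ass_R(\lc^{n}_{\fa}(M))$ that does not lie in $\Supp_R(\Ker f)$ and localize at $\fp$: since $(\Ker f)_\fp=0$ while surjectivity is preserved under localization, $f_\fp$ becomes an isomorphism. The preliminary reduction together with the standard noetherian fact that $\fp\in\Ass_R(N)$ if and only if $\fp R_\fp\in\Ass_{R_\fp}(N_\fp)$ then forces $\fp\in\Ass_R(\Ext^n_R(R/\fa,M))$. Part (c) is immediate: Corollary \ref{C:isiso}(c) makes $f$ an isomorphism, and the preliminary reduction identifies the associated prime sets on the two sides.

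The only delicate step is the localization argument in (b), where one must exploit that $\fp$ is finitely generated in order to lift an element realizing $\fp R_\fp$ as an associated prime of $\Ext^n_R(R/\fa,M)_\fp$ back to an element of $\Ext^n_R(R/\fa,M)$ with annihilator exactly $\fp$. Everything else is formal once the injectivity/surjectivity/isomorphism conclusions of Corollary \ref{C:isiso} are combined with the $\fa$-torsion reduction.
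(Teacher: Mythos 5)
Your proof is correct and follows essentially the same route as the paper: identify $\Ass_R(\lc^{n}_{\fa}(M))$ with $\Ass_R(\Hom_R(R/\fa,\lc^{n}_{\fa}(M)))$ via $\fa$-torsion, then apply Corollary~\ref{C:isiso} and the standard behaviour of $\Ass$ in exact sequences. The only cosmetic difference is that the paper packages your localization argument for part (b) as a separate lemma (Lemma~2.6), whereas you carry it out inline; the substance and the use of the noetherian hypothesis are the same.
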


For the proof of part (b) in corollary \ref{C:ass} we need the following lemma.
\begin{lem}
For any exact sequence $N\lo L\lo T\lo 0$ of $R$--modules and $R$--homomorphisms we have $$\Ass_R(T)\subset \Ass_R(L)\cup \Supp_R(N)$$
\end{lem}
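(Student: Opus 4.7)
My plan is to argue by localization at an associated prime. Take $\fp \in \Ass_R(T)$ and split into two cases according to whether $\fp$ lies in $\Supp_R(N)$ or not. The only case requiring work is when $\fp \notin \Supp_R(N)$, and the goal there is to conclude $\fp \in \Ass_R(L)$.

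The key observation is that localization preserves exactness, so from $N \lo L \lo T \lo 0$ we obtain the exact sequence $N_\fp \lo L_\fp \lo T_\fp \lo 0$. If $\fp \notin \Supp_R(N)$, then $N_\fp = 0$, and exactness at $L_\fp$ forces the image of $N_\fp$ in $L_\fp$ to be zero, so the kernel of $L_\fp \lo T_\fp$ is zero. Combined with surjectivity, this yields an isomorphism $L_\fp \cong T_\fp$.

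Next I would invoke the standard characterization that $\fp \in \Ass_R(X)$ if and only if $\fp R_\fp \in \Ass_{R_\fp}(X_\fp)$. Applying this to $T$, the hypothesis $\fp \in \Ass_R(T)$ gives $\fp R_\fp \in \Ass_{R_\fp}(T_\fp) = \Ass_{R_\fp}(L_\fp)$, and applying it back to $L$ yields $\fp \in \Ass_R(L)$, completing the proof.

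The argument is essentially routine; there is no real obstacle once one notices that the right hypothesis to exploit is $N_\fp = 0$, which turns the three-term exact sequence into an isomorphism after localization. The only mild subtlety is that we must use the noetherian hypothesis (implicit throughout the paper) to guarantee the compatibility of $\Ass$ with localization in the form quoted above.
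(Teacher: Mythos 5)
Your argument is correct and is essentially identical to the paper's proof: both localize at $\fp \in \Ass_R(T) \setminus \Supp_R(N)$, observe that $N_\fp = 0$ gives $L_\fp \cong T_\fp$, and then use the compatibility of $\Ass$ with localization to conclude $\fp \in \Ass_R(L)$.
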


\begin{proof}
Let $\fp\in \Ass_R(T)\setminus \Supp_R(N)$ then $N_{\fp}=0$, so $L_{\fp}\cong T_{\fp}$. Now ${\fp}R_{\fp}\in \Ass_{R_\fp}(T_{\fp})$,
hence ${\fp}R_{\fp}\in \Ass_{R_\fp}(L_{\fp})$, so $\fp\in \Ass_R(L)$.
\end{proof}

Divaani-Aazar and Mafi introduced in \cite{DM} {\it weakly Laskerian} modules. An $R$--module $M$ is weakly Laskerian if for any
submodule $N$ of $M$ the quotient $M/N$ has finitely many associated primes. The weakly Laskerian modules form a Serre subcategory
of the category of $R$--modules. In fact it is the largest Serre subcategory such that each module in it has just finitely many associated
prime ideals.
In the following corollary we give some conditions for the finiteness of the sets of associated primes of $\Ext^n_{R}(R/\fa,M)$ and $\lc^{n}_{\fa}(M)$
respectively.
\begin{cor}\label{C:asswl}
 \item[(a)] If $\Ass_R(\lc^{n}_{\fa}(M))$ is a finite set and $\Ext^{n-j}_{R}(R/\fa, \lc^{j}_{\fa}(M))$ is weakly Laskerian for all $j<n $ {\rm{(}}
 in particular if $\lc^{j}_{\fa}(M)$ is weakly Laskerian for all $j<n${\rm{)}}, then $\Ass_R(\Ext^n_{R}(R/\fa,M))$ is a finite set. {\rm{(}}See
 also \cite[ Corollary 6.3.11]{DY1}{\rm{)}}
 \item[(b)] If $\Ext^n_{R}(R/\fa,M)$ and $\Ext^{n+1-j}_{R}(R/\fa, \lc^{j}_{\fa}(M))$ for all $j<n$ are weakly Laskerian {\rm{(}}in particular
 if $M$ and $\lc^{j}_{\fa}(M)$ for all $j<n$ are weakly Laskerian{\rm{)}}, then $\Ass_R(\lc^{n}_{\fa}(M))$ is a finite set. {\rm{(}}Compare
 with \cite [ Corollary 2.7]{DM}{\rm{)}}
\end{cor}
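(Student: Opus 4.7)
The plan is to apply Proposition \ref{P:natmap} with $\mathcal{S}$ taken to be the Serre subcategory of weakly Laskerian $R$--modules. The key observation, used throughout, is that since $\lc^{n}_{\fa}(M)$ is $\fa$--torsion, one has
\[
\Ass_R\bigl(\Hom_R(R/\fa,\lc^{n}_{\fa}(M))\bigr)=\Ass_R\bigl(\lc^{n}_{\fa}(M)\bigr),
\]
and that for any short exact sequence $0\to A\to B\to C\to 0$ of $R$--modules, $\Ass_R(B)\subset \Ass_R(A)\cup \Ass_R(C)$. Recall also that weakly Laskerian modules have finitely many associated primes, and that the weakly Laskerian modules form a Serre subcategory. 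The particular case in (a) (respectively (b)) then follows because $\Hom_R(R/\fa,-)$ and $\Ext_R^i(R/\fa,-)$ of a weakly Laskerian module remain weakly Laskerian.

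For part (a), apply Proposition \ref{P:natmap}(a) with $\mathcal{S}$ the class of weakly Laskerian modules: the hypothesis gives that $\Ker f$ is weakly Laskerian, and in particular $\Ass_R(\Ker f)$ is finite. From the exact sequence
\[
0\lo \Ker f\lo \Ext^n_R(R/\fa,M)\lo \Image f\lo 0,
\]
and the containment $\Image f\subset \Hom_R(R/\fa,\lc^{n}_{\fa}(M))$, one obtains
\[
\Ass_R\bigl(\Ext^n_R(R/\fa,M)\bigr)\subset \Ass_R(\Ker f)\cup \Ass_R(\Image f)\subset \Ass_R(\Ker f)\cup \Ass_R\bigl(\lc^{n}_{\fa}(M)\bigr),
\]
which is a finite set by hypothesis. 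The parenthetical case follows because if each $\lc^{j}_{\fa}(M)$ with $j<n$ is weakly Laskerian, then so is each $\Ext_R^{n-j}(R/\fa,\lc^{j}_{\fa}(M))$.

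For part (b), apply Proposition \ref{P:natmap}(b) with the same $\mathcal{S}$: then $\Coker f$ is weakly Laskerian, hence has finitely many associated primes. Since the weakly Laskerian property passes to quotients, $\Image f$, being a quotient of $\Ext^n_R(R/\fa,M)$, is also weakly Laskerian and so has finitely many associated primes. Using the exact sequence
\[
0\lo \Image f\lo \Hom_R(R/\fa,\lc^{n}_{\fa}(M))\lo \Coker f\lo 0
\]
together with the identity $\Ass_R(\Hom_R(R/\fa,\lc^{n}_{\fa}(M)))=\Ass_R(\lc^{n}_{\fa}(M))$, we deduce
\[
\Ass_R\bigl(\lc^{n}_{\fa}(M)\bigr)\subset \Ass_R(\Image f)\cup \Ass_R(\Coker f),
\]
which is finite. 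The parenthetical remark again follows since weakly Laskerian modules are closed under the functors $\Ext_R^i(R/\fa,-)$.

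The only real obstacle is checking that the Serre-subcategory hypothesis of Proposition \ref{P:natmap} is applicable, i.e., that the weakly Laskerian modules indeed form a Serre subcategory and that the given finiteness inputs translate into the required Ext-modules lying in $\mathcal{S}$; both are standard facts about weakly Laskerian modules (see \cite{DM}). Apart from these, the argument is formal bookkeeping with short exact sequences and the fundamental inequality on associated primes stated in the lemma preceding the corollary.
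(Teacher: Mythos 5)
Your proof is correct and follows essentially the same route as the paper: both invoke Proposition \ref{P:natmap} with $\mathcal{S}$ the Serre subcategory of weakly Laskerian modules to conclude that $\Ker f$ (for part (a)) or $\Coker f$ (for part (b)) is weakly Laskerian, then use the four-term exact sequence relating $\Ker f$, $\Ext^n_R(R/\fa,M)$, $\Hom_R(R/\fa,\lc^n_\fa(M))$, and $\Coker f$ together with the identification $\Ass_R(\Hom_R(R/\fa,\lc^n_\fa(M)))=\Ass_R(\lc^n_\fa(M))$. The only cosmetic difference is that you explicitly split the four-term sequence into two short exact sequences through $\Image f$ and spell out that $\Image f$ is weakly Laskerian in part (b) as a quotient of $\Ext^n_R(R/\fa,M)$; the paper leaves exactly this bookkeeping to the reader.
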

\begin{proof}
Note that $\Ass_R(\lc^{n}_{\fa}(M))=\Ass_R(\Hom_{R}(R/\fa,\lc^{n}_{\fa}(M)))$. We use the exact sequence
\begin{center}
$0\rightarrow \Ker f\rightarrow \Ext^n_{R}(R/\fa,M)\overset{f}\rightarrow \Hom_{R}(R/\fa,\lc^{n}_{\fa}(M))\rightarrow \Coker f\rightarrow 0$.
\end{center}
(a) By \ref{P:natmap} (a), $\Ker f$ is weakly Laskerian.
\begin{flushleft}
(b) By \ref{P:natmap} (b), $\Coker f$ is weakly Laskerian.
\end{flushleft}
\end{proof}

\section{Asymptotic behaviours}

Assume that $R = \underset{i\geq{0}}{\bigoplus}{R_i}$ is a homogeneous graded noetherian ring and $M = \underset{i\in \Z}\bigoplus{M_i}$ is
a graded $R$--module. The module $M$ is said to have

(a) the property of {\it asymptotic stablity of associated primes} if there exists an integer $n_0$ such that $\Ass_{R_0}(M_n)=\Ass_{R_0}(M_{n_0})$
for all $n\leq{n_0}$.

(b) the property of {\it asymptotic stablity of supports} if there exists an integer $n_0$ such that $\Supp_{R_0}(M_n)=\Supp_{R_0}(M_{n_0})$ for
all $n\leq{n_0}$.

(c) the property of {\it tameness} if $M_i=0$ for all $i<<0$ or else $M_i\neq 0$ for all $i<<0$.

Note that if $M$ has the property of (a) then $M$ has the property of (b) and if $M$ has the property of (b)  then $M$ is tame i.e. has the property
of (c).

Let $s$ be a non-negative integer. In order to refine and complete \cite[Theorem 4.4]{DN} we study the asymptotic behaviour of the kernel and cokernel
of the natural homomorphism between the graded modules $\Ext_{R}^{s}(R/R_{+}, M)$ and $\Hom_{R}(R/R_{+},\lc_{R_{+}}^{s}(M))$. For more information see
also \cite{Br}.

\begin{defn}
A graded module $M$ over a homogeneous graded ring $R$ is called asymptotically zero if $M_i=0$ for all $i<<0$.
\end{defn}
All finite graded $R$--modules are asymptotically zero.

\begin{thm}\label{T:graded}
Assume that $M$ is a graded $R$--module and let $s$ be a fixed non-negative integer such that the modules
\begin{center}
$\Ext_{R}^{s-j}(R/R_{+},\lc_{R_{+}}^{j}(M))$ and  $\Ext_{R}^{s-j+1}(R/R_{+},\lc_{R_{+}}^{j}(M))$ , $0\leq j<s$
\end{center}
are asymptotically zero {\rm{(}}e.g. they might be finite{\rm{)}}. Then the kernel and the cokernel of the natural homomorphism

$$f:\Ext_{R}^{s}(R/R_{+}, M)\lo \Hom_{R}(R/R_{+},\lc_{R_{+}}^{s}(M))$$
are asymptotically zero. Therefore $\Ext_{R}^{s}(R/R_{+}, M)$ has one of the properties of {\rm{(}}a{\rm{)}}, {\rm{(}}b{\rm{)}} and {\rm{(}}c{\rm{)}}
if and only if $\Hom_{R}(R/R_{+},\lc_{R_{+}}^{s}(M))$ has.

\end{thm}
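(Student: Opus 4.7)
The plan is to apply Proposition \ref{P:natmap}(c) with the Serre subcategory $\mathcal S$ chosen to be the class of graded $R$-modules that are asymptotically zero, working throughout inside the category of graded $R$-modules. First I would verify that this class really is a Serre subcategory: given a short exact sequence $0\lo A\lo B\lo C\lo 0$ of graded modules with degree-preserving maps, inspection of the $i$-th graded component shows that $B_i=0$ for all $i\ll 0$ if and only if $A_i=0$ and $C_i=0$ for all $i\ll 0$, so the class is closed under graded submodules, quotients and extensions.

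Next I would observe that all of the ingredients used in the proof of Proposition \ref{P:natmap} -- the torsion functor $\Gamma_{R_+}$, the local cohomology modules $\lc^j_{R_+}$, the $\Ext$ and $\Hom$ functors from $R/R_+$, the natural transformation producing $f$, and the six-term exact sequence relating $\Ker f,\Coker f,\Ker h,\Coker h,\Ker g,\Coker g$ -- all preserve the grading once $R$ is homogeneous and $M$ is graded; in the inductive step one only needs a \emph{graded} injective hull of $M/\Gamma_{R_+}(M)$, which exists. Hence the proof of Proposition \ref{P:natmap}(c) runs verbatim in the graded category, and the hypothesis that $\Ext_{R}^{s-j}(R/R_{+},\lc_{R_{+}}^{j}(M))$ and $\Ext_{R}^{s+1-j}(R/R_{+},\lc_{R_{+}}^{j}(M))$ are asymptotically zero for every $0\le j<s$ yields, via this graded version of \ref{P:natmap}(c), that both $\Ker f$ and $\Coker f$ lie in $\mathcal S$, i.e. are asymptotically zero.

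Finally, I would translate the asymptotic vanishing of $\Ker f$ and $\Coker f$ into the equivalence of the three asymptotic properties. Because $f$ is a homogeneous map of degree zero, there exists $n_1\in\Z$ such that for every $i\le n_1$ the induced $R_0$-homomorphism
$$f_i:\Ext_{R}^{s}(R/R_{+},M)_i\lo \Hom_{R}(R/R_{+},\lc_{R_{+}}^{s}(M))_i$$
is an isomorphism of $R_0$-modules. Consequently, for $i\le n_1$ the sets $\Ass_{R_0}$, the supports $\Supp_{R_0}$, and the vanishing/nonvanishing of the two graded components coincide, so each of the properties (a), (b), (c) is shared by the source and the target of $f$.

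The main obstacle I anticipate is the careful bookkeeping needed to justify the graded version of Proposition \ref{P:natmap}: one must check that the reduction to $M/\Gamma_{R_+}(M)$, the degree shift coming from a graded injective resolution, the isomorphisms $\Ext_R^i(R/R_+,N)\cong \Ext_R^{i+1}(R/R_+,M/\Gamma_{R_+}(M))$ and $\lc^i_{R_+}(N)\cong \lc^{i+1}_{R_+}(M)$, as well as the identification $\Ker f'=\Ker g$, all take place through degree-preserving homomorphisms, so that membership in $\mathcal S$ is propagated through the induction on $s$. Once this is in hand, the theorem is a direct consequence of \ref{P:natmap}(c).
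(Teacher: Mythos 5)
Your argument is correct and follows the paper's proof: both identify the class of asymptotically zero graded modules as a Serre subcategory of the category of graded $R$--modules and then invoke Proposition~\ref{P:natmap}(c). The extra care you take in checking that the proof of Proposition~\ref{P:natmap} runs in the graded category (graded injective hulls, degree-preserving maps throughout) is a point the paper leaves implicit, and your verification of it is sound.
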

\begin{proof}
  Note that the class of asymptotically zero graded modules is Serre
  subcategory of the category of graded $R$--modules and use
  \ref{P:natmap} (c).
\end{proof}

\providecommand{\bysame}{\leavevmode\hbox
to3em{\hrulefill}\thinspace}

\end{document}